\documentclass[11pt]{amsart}

\usepackage[bookmarks=true]{hyperref}

\usepackage{mathptmx}
\usepackage{amsmath}
\usepackage{amssymb}
\usepackage{amsthm}
\usepackage{array}
\usepackage[all,tips]{xy}
\usepackage{enumerate}
\usepackage{graphicx}
\usepackage{lmodern}
\usepackage{mathrsfs}

\newtheorem{theorem}{Theorem}
\newtheorem{lemma}[theorem]{Lemma}

\newtheorem{cor}[theorem]{Corollary}

\theoremstyle{definition}

\DeclareMathOperator{\SL}{SL}

\DeclareMathOperator{\PU}{PU}
\DeclareMathOperator{\PSO}{PSO}
\DeclareMathOperator{\PSL}{PSL}

\DeclareMathOperator{\Aut}{Aut}

\newcommand{\N}{\mathbb N}

\newcommand{\R}{\mathbb R}
\newcommand{\C}{\mathbb C}

\newcommand{\f}{\mathfrak f}

\newcommand{\M}{\overline M}
\newcommand{\Rad}{R}

\newcommand{\bL}{\overline{L}}

\newcommand{\Hy}{\mathcal H}

\newcommand{\Af}{\mathbb{A}_\mathrm{f}}

\newcommand{\V}{V}
\newcommand{\Vf}{\V_{\mathrm{f}}}

\newcommand{\Snc}{\mathscr{S}}
\newcommand{\piS}{\pi_\Snc}

\newcommand{\nqs}{T}
\newcommand{\Spl}{S}
\newcommand{\Ram}{R}

\newcommand{\cG}{c_\G}

\newcommand{\PA}{\Lambda}

\renewcommand{\P}{P}
\newcommand{\Po}{P^{(1)}}
\newcommand{\Poo}{P^{(2)}}
\newcommand{\Pk}{K}

\newcommand{\K}{\mathcal{K}}

\newcommand{\G}{\mathrm G}
\newcommand{\bG}{\overline{\G}}

\newcommand{\GG}{\mathcal G}

\newcommand{\gC}{\mathfrak{g}_\C}

\newcommand{\X}{X}

\newcommand{\zk}{\mathcal{O}}

\newcommand{\bs}{\backslash}

\newcommand{\CG}{\mathrm C}

\newcommand{\tA}{\mathrm{A}}

\newcommand{\tD}{\mathrm{D}}

\begin{document}
\title{Arbitrarily large families of spaces of the same volume}

\author{Vincent Emery}
\thanks{Supported by Swiss National Science Foundation, Project number {\tt PP00P2-128309/1}
}
\address{
Section de math\'ematiques\\
Universit\'e de Gen\`eve\\
2--4 rue du Li\`evre, Case postale~64\\
CH-1211 Gen\`eve~4\\
SWITZERLAND
}
\email{vincent.emery@gmail.com}

\date{\today}

\subjclass{22E40 (primary); 11E57, 20G30, 51M25 (secondary)}

\begin{abstract}

	In any connected non-compact semi-simple Lie group without
	factors locally isomorphic to $\SL_2(\R)$, there can be only
	finitely many lattices (up to isomorphism) of a given covolume.
	We show that there exist arbitrarily large families of pairwise
	non-isomorphic arithmetic lattices of the same covolume. We
	construct these lattices with the help of Bruhat-Tits theory,
	using Prasad's volume formula to control their covolumes.

\end{abstract}

\maketitle

\section{Introduction}
\label{sec:intro}

Let $\GG$ be a connected  semi-simple real Lie group without compact
factors. For simplicity we will suppose that $\GG$ is adjoint (i.e., with trivial
center), though this is not a major restriction in this article.  Any
choice of a Haar measure $\mu$ on $\GG$ assigns a \emph{covolume}
$\mu(\Gamma \bs \GG) \in \R_{> 0}$ to each lattice $\Gamma$ in $\GG$.
Wang's theorem~\cite{Wang72} asserts that there exist only finitely many
irreducible lattices (up to conjugation) of bounded covolumes in $\GG$
unless $\GG$ is isomorphic to $\PSL_2(\R)$ or $\PSL_2(\C)$.  In
particular, there exist only finitely many irreducible lattices in $\GG$
of a given covolume.  For $\GG$ isomorphic to $\PSL_2(\C)$ this property
is still true, as follows from the work of Thurston and J\o
rgensen~\cite[Ch.~6]{Thur80}.  In this paper we prove that the number of
lattices in $\GG$ of the same covolume can be arbitrarily large.  In most
cases, arbitrarily large families of lattices of equal covolume appear in
the commensurability class of any arithmetic lattice of $\GG$.  This is
the content of the following theorem. The symbol $\gC$ denotes the
complexification of the Lie algebra of $\GG$.

\begin{theorem}
	\label{thm:general}

	Let $\GG$ be a connected adjoint semi-simple real Lie group
	without compact factors.
	We suppose that $\gC$ has a simple factor that is not of type
	$\tA_1$, $\tA_2$ or $\tA_3$.
	Let $\Gamma$ be an  arithmetic lattice in $\GG$.
	Then, for every $m \in \N$, there exist a family of  $m$
	lattices commensurable to $\Gamma$ that are pairwise non-isomorphic
	and have the same covolume in $\GG$.
	These lattices can be chosen torsion-free.

\end{theorem}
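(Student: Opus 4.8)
The plan is to construct the $m$ lattices as arithmetic lattices arising from a fixed semisimple group $G$ over a number field $k$ — the $k$-form underlying the commensurability class of $\Gamma$ — by varying the choice of coherent collection of parahoric subgroups $(P_v)_v$ at a controlled set of finite places. Prasad's volume formula expresses the covolume of the principal arithmetic subgroup determined by $(P_v)_v$ as a product of an archimedean factor (depending only on $G$ and $k$, hence fixed across the family) times a finite product of local indices $[\,\tilde P_v : P_v\,]$, essentially $e(P_v)$, the $\lambda$-factor recording how far $P_v$ is from a hyperspecial (or special) parahoric. The key idea is that the covolume is governed by these local factors in a multiplicative way, so matching covolumes reduces to matching a product of local contributions; meanwhile two such lattices are non-isomorphic (non-conjugate, and a fortiori not abstractly isomorphic after passing to finite-index torsion-free subgroups via Margulis/Mostow rigidity) precisely when their local structures differ at some place.

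First I would fix a prime $p$ that splits suitably in $k$ and produces, at the residue field $\mathbb F_q$, a local group whose Bruhat–Tits building carries parahorics of several distinct types. The hypothesis that $\gC$ has a simple factor not of type $\tA_1$, $\tA_2$, $\tA_3$ is exactly what guarantees a local Dynkin diagram rich enough that two genuinely different parahoric subgroups $P_v$ and $P_v'$ can have the \emph{same} index $e(P_v) = e(P_v')$ in their normalizers: in the relevant local factors the quantity $\prod_{i} \frac{q^{d_i}-1}{\text{(something)}}$ attached to a facet depends only on the combinatorial type of the facet, and once the diagram has rank $\ge 3$ in an appropriate sense one finds non-isomorphic facets giving equal local factors. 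Second, I would choose a set $S$ of $m$ distinct primes and, at each, select a parahoric whose local factor equals a fixed value $\ell$, then additionally arrange that the collections differ from one another combinatorially so that the resulting lattices are pairwise non-conjugate in $\GG$; by construction each contributes the same factor $\ell^{m}$ to the covolume, leaving all covolumes equal.

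Next I would verify that the resulting principal arithmetic subgroups are genuinely pairwise non-isomorphic. Here I would invoke rigidity: an abstract isomorphism between two such lattices would, under the standing hypotheses (semisimple, no $\PSL_2(\R)$ factor, higher rank or Mostow's theorem in rank one), be induced by an inner automorphism of $\GG$ up to the finitely many outer ones, hence would match the local data up to a controlled finite symmetry. I would therefore need to take the set $S$ large enough, or the local configurations rich enough, to defeat this finite ambiguity and still produce $m$ distinct commensurable lattices; a clean way is to encode the difference at a single auxiliary place in a way no automorphism can absorb, while using the remaining places only to balance the volume. Finally, passing to a common torsion-free finite-index subgroup via Selberg's lemma and a uniform re-indexing (choosing auxiliary congruence conditions of the same index across the whole family) yields torsion-free lattices that remain pairwise non-isomorphic and of equal covolume.

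\textbf{The main obstacle} I anticipate is the simultaneous control of \emph{equal volume} and \emph{non-isomorphism}: these pull in opposite directions, since equal volume forces the local factors to coincide while non-isomorphism requires the local structures to genuinely differ. The crux is thus a combinatorial lemma on Bruhat–Tits local indices — showing that for a simple factor not of type $\tA_1,\tA_2,\tA_3$ there exist distinct facet-types in the local Dynkin diagram whose associated $e(P_v)$-factors agree — and this is precisely where the exclusion of the three low-rank $\mathrm A$-types enters. Making the torsion-free reduction volume-preserving across the family (so Selberg's lemma does not reintroduce volume discrepancies) is a secondary technical point I would handle by imposing identical congruence levels at a place disjoint from $S$.
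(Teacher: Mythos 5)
Your overall strategy coincides with the paper's: vary a coherent collection of parahoric subgroups at finitely many well-chosen places, control the covolume by the local factors in Prasad's formula, pass from non-conjugacy to non-isomorphism via the finiteness of $\Aut(\GG)/\GG$ together with strong rigidity, and achieve torsion-freeness by congruence conditions at auxiliary places. However, the step you yourself single out as the crux --- producing, at a single place $v$ where $\G$ splits, two parahoric subgroups that are non-conjugate under $\bG(k_v)$ yet have equal local volume factor --- is left as an anticipated obstacle rather than proved, and your heuristic for it (``rank $\ge 3$ in an appropriate sense'') mislocates why types $\tA_1$, $\tA_2$, $\tA_3$ are excluded. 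The correct dichotomy is not a rank bound but the shape of the local Dynkin diagram of a \emph{split} group: for every absolute type other than $\tA_n$ one takes two maximal parahorics, one of hyperspecial type $\{\alpha_1\}$ and one of non-hyperspecial type $\{\alpha_2\}$; both have reductive quotient $\M_v$ with semisimple part of type $\tA_1$ and radical a split torus of rank $n-1$, hence equal $|\M_v(\f_v)|$ and equal $\dim \M_v$, while no automorphism of $\Delta_v$ can carry a hyperspecial vertex to a non-hyperspecial one. This already works in rank $2$ (types $\tC_2$ and $\tG$), which a rank heuristic would wrongly exclude. For split type $\tA_n$ the diagram is a cycle of $n+1$ hyperspecial vertices on which $\bG(k_v)$ acts transitively by rotation, so no pair of single vertices can work; the paper instead uses the types $\{\alpha_0,\alpha_2\}$ and $\{\alpha_0,\alpha_3\}$, which span isomorphic ($\tA_1\times\tA_1$) subdiagrams, hence give isomorphic $\M_v$, but are rotation-inequivalent precisely when $n\ge 4$ --- this, and not a generic richness condition, is where $\tA_2$ and $\tA_3$ drop out.

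Two further points need more than a gesture. First, the passage from ``$\P_w$ and $\P'_w$ are not $\bG(k_w)$-conjugate'' to ``the global lattices are not conjugate in $\GG$'' is not automatic: one uses Zariski density of arithmetic subgroups to reduce a conjugation in $\GG$ to conjugation by an element of $\bG(k)$, and then strong approximation to see that this element must conjugate the closures $\P_w$ and $\P'_w$. Second, for the torsion-free reduction, imposing ``identical congruence levels'' does not by itself give equal covolumes across non-conjugate lattices --- equality of the global indices again comes from strong approximation, which identifies the global index with the product of local indices --- and the congruence kernel at a \emph{single} place is only pro-$p$, so it may retain $p$-torsion; the paper imposes the condition at two places of different residue characteristics so that the intersection is torsion-free. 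With these ingredients supplied, your outline matches the published proof.
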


Every arithmetic lattice $\Gamma \subset \GG$ is constructed with the
help of some algebraic group $\G$ defined over a number field $k$ (see
Section~\ref{ss:arithmetic-subgroup}). 
To prove Theorem~\ref{thm:general}, we use Bruhat-Tits theory to
construct families of arithmetic subgroups in $\G(k)$ that are
non-conjugate, and have equal covolume.
By strong (Mostow) rigidity one obtains the analogous result with ``pairwise
non-conjugate'' replaced with ``pairwise non-isomorphic''.
To control the covolume we
use some computations that appear in Prasad's volume formula~\cite{Pra89}. 
To ensure that the subgroups
constructed are not conjugate we need to exhibit parahoric subgroups in
$\G(k_v)$ (where $k_v$ is a non-archimedean completion of $k$) that are not
conjugate but of the same volume. This can be easily achieved 
when $\G$ is not of type $\tA_n$ and is split over $k_v$.
When $\G$ is of type $\tA_n$ the Bruhat-Tits building of a split $\G(k_v)$ has
more symmetries, and the argument must be slightly adapted. In
particular, there we need the assumption $n \ge 4$, which explains the excluded cases
in the statement of Theorem~\ref{thm:general}.
The simple Lie groups excluded are listed in Table~\ref{tab:exceptions}.
\begin{table}[t]
	\centering
	\begin{tabular}{ll}
	Type $\tA_1$: & $\PSL_2(\R)$  and  $\PSL_2(\C)$; \\[3pt]
	Type $\tA_2$: & $\PSL_3(\R)$, $\PSL_3(\C)$  and $\PU(2,1)$; \\[3pt]
	Type $\tA_3$: & $\PSL_4(\R)$, $\PSL_4(\C)$, $\PSO(5,1)$, $\PU(3,1)$ and 
	$\PU(2,2)$.
	\end{tabular}
	\medskip
	\medskip
	\caption{Simple Lie groups not covered in
	Theorem~\ref{thm:general}}
	\label{tab:exceptions}
\end{table}
	\label{eq:exceptions}

For the Lie groups of type $\tA_2$ and $\tA_3$ we can use algebraic
groups that are outer forms (type $^2\tA_2$ and $^2\tA_3$) to show the
existence of arbitrarily large families of arithmetic lattices  of the same
covolume. In contrast with Theorem~\ref{thm:general}, now each family
corresponds to a different commensurability class.

\begin{theorem}
	\label{thm:exceptions}

	Let $\GG$ be a connected adjoint semi-simple Lie group without compact
	factors. We suppose that $\gC$ contains only factors of type
	$\tA_2$ (resp. only factors of type $\tA_3$). Let $m \in \N$.
	Then there exists
	a family $\left\{ \Gamma_1, \dots, \Gamma_m \right\}$ of
	irreducible arithmetic lattices in $\GG$ such that for $i,j \in
	\left\{ 1,\dots m \right\}$:
	\begin{enumerate}
		\item $\Gamma_i$ is commensurable to $\Gamma_j$;
		\item $\Gamma_i$ and $\Gamma_j$ have the same covolume in
			$\GG$;
		\item if $i \neq j$, then $\Gamma_i$ and $\Gamma_j$ are
			not isomorphic.
	\end{enumerate}
	The lattices $\left\{  \Gamma_i
	\right\}$ can be  chosen torsion-free.  Moreover, they  can be
	chosen cocompact. They can be chosen non-cocompact unless
	there are no such lattices in $\GG$.

\end{theorem}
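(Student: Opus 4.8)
The plan is to realise $\GG$ as the product of the noncompact archimedean factors of a single outer form $\G$ of type $^2\tA_2$ (resp.\ $^2\tA_3$) over a number field $k$, and then to produce the family by varying the parahoric subgroups of $\G$ at a large set of finite places that are inert in the quadratic extension $\ell/k$ defining the $*$-action. First I would fix $k$, $\ell$ and a Hermitian structure (a Hermitian form of rank $3$, resp.\ an involutive central simple $\ell$-algebra of degree $4$), so that the special unitary group $\G = \SU(h)$ has the prescribed real points: at a real place split in $\ell$ one gets $\SL_n(\R)$, at a complex place $\SL_n(\C)$, and at a real place inert in $\ell$ a group with adjoint $\PU(p,q)$ (resp.\ also $\PSO(5,1)$ when the local algebra is quaternionic), so that every factor listed for $\GG$ can be matched. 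To obtain cocompact lattices I would arrange $\G$ to be $k$-anisotropic by making $h$ anisotropic at one auxiliary place; by the Hasse principle this is compatible with the chosen indefinite signatures at the places contributing to $\GG$. To obtain non-cocompact lattices I would instead take $\G$ to be $k$-isotropic, which is possible precisely when $\GG$ admits non-cocompact lattices, accounting for the proviso.

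The core of the construction is local. At a finite place $v$ inert in $\ell$, the group $\G(k_v)$ is the quasi-split unramified unitary group of type $^2\tA_2$ (resp.\ $^2\tA_3$); in contrast to the split case of type $\tA_n$, the associated twisted affine Dynkin diagram carries only a small group $\Omega_v$ of diagram automorphisms realised by $\G(k_v)$. Consequently two of its maximal parahoric subgroups $P_v^{(1)}, P_v^{(2)}$ are non-conjugate, and their volumes, read off from the reductive quotients over the residue field through Prasad's local factors $\lambda_v^{(1)}, \lambda_v^{(2)}$, depend only on the residue cardinality $q_v$. I would then use Chebotarev's theorem to choose $N$ rational primes $p_1, \dots, p_N$, each splitting in $k$ into at least two places of residue cardinality $p_i$ that are inert in $\ell$; selecting one such pair $(v_i, v_i')$ for each $i$ and taking, independently in $i$, either $(P_{v_i}^{(1)}, P_{v_i'}^{(2)})$ or $(P_{v_i}^{(2)}, P_{v_i'}^{(1)})$, I obtain $2^N$ coherent families of parahoric subgroups. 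Within each pair the two places have equal residue cardinality, so every one of these choices gives the same total local factor $\prod_i \lambda_{v_i}^{(1)} \lambda_{v_i'}^{(2)}$; hence, by Prasad's volume formula, the resulting lattices $\Lambda_S \subset \G(k)$ (indexed by $S \subseteq \{1,\dots,N\}$) are pairwise commensurable and have the same covolume in $\GG$.

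It remains to extract $m$ pairwise non-isomorphic lattices from the $\Lambda_S$. Here I would use that the local parahoric type at each $v_i$ is a conjugacy invariant that inner automorphisms cannot change, so that by strong rigidity and arithmeticity any isomorphism $\Lambda_S \cong \Lambda_{S'}$ is induced by an element of the finite group $\Aut(\G)/\mathrm{Inn}(\G)$ together with a field automorphism in $\Aut(k/\Q)$. Since this acting group is finite, of some order $C$ independent of $N$, at least $2^N/C$ of the $\Lambda_S$ are pairwise non-isomorphic, which exceeds $m$ once $N$ is large. To make them torsion-free I would intersect each $\Lambda_S$ with a fixed principal congruence subgroup of level $\mathfrak a$ coprime to all the $v_i, v_i'$; as $\mathfrak a$ avoids these places the index is the same for every $S$, so equal covolumes and non-isomorphism are preserved, and $\mathfrak a$ can be chosen so that the congruence subgroup is torsion-free.

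The step I expect to be the main obstacle is the local analysis at the places of type $^2\tA_2$: the unramified unitary group in three variables has relative rank one, so its two maximal parahorics necessarily have \emph{different} volumes, and one cannot vary the parahoric type independently place by place as in the split non-$\tA_n$ setting. This is exactly why the balancing over paired places is needed, and why one must verify, using the twisted affine diagram and the smallness of $\Omega_v$, both that $P_v^{(1)}$ and $P_v^{(2)}$ are genuinely non-conjugate and that their local factors depend only on $q_v$. A secondary difficulty is the archimedean bookkeeping required to match $\GG$ exactly, including the realisation of $\PSO(5,1)$ factors and the simultaneous availability of the cocompact and non-cocompact variants.
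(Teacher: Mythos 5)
Your overall strategy---realise $\GG$ via an outer form $\G$ of type $^2\tA_n$ over $(k,\ell)$, vary parahoric types at finitely many finite places, control covolumes with Prasad's formula, and pass to non-isomorphism via strong rigidity and the finiteness of $\Aut(\GG)/\GG$---matches the paper's. But the local mechanism you use is genuinely different from the paper's, and it has a gap that is not repairable in all the cases the theorem covers. You work at places \emph{inert} in $\ell|k$, where (as you correctly observe) the two non-conjugate maximal parahorics of the unramified quasi-split unitary group have \emph{different} local factors, and you compensate by pairing up two places $v_i, v_i'$ of $k$ with the same residue cardinality and swapping the types between them. This balancing requires $k$ to have at least two places of equal residue cardinality above infinitely many rational primes, hence $[k:\Q] \ge 2$. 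That is available in the cocompact case (one may enlarge $k$ and make $\G$ anisotropic, hence compact, at the extra archimedean places), but it fails precisely where the non-cocompact half of the theorem is at stake: if $\GG$ is a single factor such as $\PU(2,1)$, $\PU(3,1)$, $\PU(2,2)$ or $\PSL_3(\R)$, then a $k$-isotropic $\G$ must be isotropic at every archimedean place of $k$, which forces $k = \Q$; over $\Q$ each prime has a unique place and a unique residue cardinality, so no two inert places can be paired, and the ratio of the two local factors is a non-constant function of the residue cardinality, so cross-prime balancing is impossible. Your proposal therefore does not establish the non-cocompact assertion for these groups (e.g.\ the Picard-modular-type lattices in $\PU(2,1)$).

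The paper avoids this entirely by working at the \emph{ramified} places of $\ell|k$: it first uses weak approximation to choose $\alpha$ so that $\ell = k(\sqrt{\alpha})$ has the prescribed archimedean behaviour \emph{and} at least $\log_2 m$ ramified finite places, and then invokes Prasad--Rapinchuk to get $\G$ quasi-split at all of these. At a ramified place the local Dynkin diagram is $\mbox{C--BC}_1$ (type $\tA_2$), resp.\ $\mbox{C--B}_2$ (type $\tA_3$); it admits no nontrivial automorphism, yet two of its vertices give maximal parahorics whose reductive quotients over $\f_v$ have equal dimension and equal order of rational points, hence identical local factors. So the types can be varied independently place by place, exactly as in the non-$\tA_n$ split case, with no pairing and no constraint on $[k:\Q]$. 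If you want to keep your inert-place construction, you would at minimum have to restrict it to the cocompact case and supply a separate argument (for instance the ramified-place one) when $k=\Q$ is forced; you would also need to make the Chebotarev selection precise (primes split in $k$ whose places are inert in $\ell$, via the Galois closure of $\ell/\Q$) and to carry out the archimedean matching for the $\PSO(5,1)$ factors, which the paper delegates wholesale to the Prasad--Rapinchuk existence theorem.
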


It follows from Margulis' arithmeticity theorem that irreducible
lattices can only exist in a Lie group $\GG$ that is isotypic (i.e., for
which all the simple factors of $\gC$ have the same type), so that the
assumptions in Theorem~\ref{thm:exceptions} are minimal.  The existence
of irreducible cocompact lattices in any isotypic $\GG$ was proved by
Borel and Harder~\cite{BorHar78}.  Non-compact
irreducible quotients of $\GG$ do not always exist. For example there is no such
quotient of $\PU(3,1) \times \PSO(5,1)$ (this example is detailed
in~\cite[Prop. (15.31)]{WittMorr08}). A general criterion for the
existence of non-cocompact arithmetic lattices appears in the work of
Prasad-Rapinchuk~\cite{PraRap06}, where the authors extend the results
of~\cite{BorHar78}. The proof of
Theorem~\ref{thm:exceptions} uses these existence results.

By Wang's theorem, it is clear that the covolume common to the lattices
of a family grows with the size of the family. Even though in this
article we focus on qualitative results, we note that the proofs of
Theorems~\ref{thm:general}--\ref{thm:exceptions} could be used to obtain
quantitative results on the growth of the covolume with the size of the
family.

\medskip

We now discuss  the geometric significance of our results. Let $\X$ be
the symmetric space associated with  $\GG$, that is $\X =
\GG/K$ for a maximal compact subgroup $K \subset \GG$. This class of
spaces includes the \emph{hyperbolic $n$-space} $\Hy^n$; we have that
$\Hy^2$ is associated with $\GG = \PSL_2(\R)$, and $\Hy^3$ with $\GG =
\PSL_2(\C)$.  For a torsion-free irreducible lattice $\Gamma \subset
\GG$, the locally symmetric space $\Gamma \bs \X$ will be called
\emph{an $\X$-manifold} (in particular it is irreducible and has finite
volume).  The following result follows directly from
Theorems~\ref{thm:general}--\ref{thm:exceptions} and the existence of
cocompact arithmetic lattices in $\GG$ (see for
instance~\cite[Theorem~1]{PraRap06}).

\begin{cor}
	\label{cor:same-volume}

	Let $X$ be a Riemannian symmetric space of non-compact type that contains
	no factor isometric to $\Hy^2$ or $\Hy^3$, and suppose that
	irreducible quotients of $X$ do exist.
	Then there exist arbitrarily large families of pairwise
	non-isometric commensurable compact $X$-manifolds having the same volume. 
	The analogue statement with non-compact $X$-manifolds is true unless
	all $X$-manifolds are compact.
\end{cor}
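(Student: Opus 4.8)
The plan is to derive Corollary~\ref{cor:same-volume} directly from Theorems~\ref{thm:general} and~\ref{thm:exceptions} via the dictionary between lattices in $\GG$ and locally symmetric spaces, so the real content is a translation rather than a new argument. First I would fix a symmetric space $\X$ of non-compact type with no factor isometric to $\Hy^2$ or $\Hy^3$, and write $\X = \GG/K$ for $\GG$ the identity component of the isometry group (which is connected, semi-simple, adjoint, and without compact factors) and $K$ a maximal compact subgroup. The hypothesis that $\X$ has no $\Hy^2$ or $\Hy^3$ factor translates, under $\Hy^2 \leftrightarrow \PSL_2(\R)$ and $\Hy^3 \leftrightarrow \PSL_2(\C)$, into the condition that $\gC$ has no simple factor of type $\tA_1$. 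The remaining low-rank cases of type $\tA_2$ and $\tA_3$ are then exactly the situation of Theorem~\ref{thm:exceptions}, while everything else (a simple factor not of type $\tA_1,\tA_2,\tA_3$) is covered by Theorem~\ref{thm:general}.

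\smallskip
Next I would assemble the family. If $\gC$ is isotypic of type $\tA_2$ or of type $\tA_3$, I apply Theorem~\ref{thm:exceptions}; otherwise $\gC$ has a simple factor outside types $\tA_1,\tA_2,\tA_3$, and I fix any arithmetic lattice $\Gamma \subset \GG$ (which exists, e.g. a cocompact one by Borel--Harder~\cite{BorHar78} or~\cite{PraRap06}) and apply Theorem~\ref{thm:general}. In either case, for each $m \in \N$ I obtain $m$ pairwise non-isomorphic, commensurable, torsion-free lattices $\Gamma_1,\dots,\Gamma_m$ of equal covolume. Passing to the quotients $M_i = \Gamma_i \bs \X$ produces $m$ commensurable $\X$-manifolds. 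Equality of covolume with respect to a fixed Haar measure $\mu$ on $\GG$ gives equality of Riemannian volume of the $M_i$ up to the single normalizing constant relating $\mu$ to the Riemannian measure on $\X$, which is the same for all $i$; hence the $M_i$ have equal volume.

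\smallskip
The one point requiring genuine care is the passage from ``$\Gamma_i$ not isomorphic to $\Gamma_j$'' to ``$M_i$ not isometric to $M_j$.'' Here I invoke Mostow--Prasad rigidity: for irreducible torsion-free lattices in $\GG$ (with $\GG$ not locally isomorphic to $\PSL_2(\R)$, which is excluded by the no-$\Hy^2$ hypothesis), an isometry $M_i \to M_j$ of the locally symmetric spaces lifts to an isometry of $\X$ normalizing the lattices, and therefore induces an isomorphism $\Gamma_i \cong \Gamma_j$. Since the latter is excluded, the $M_i$ are pairwise non-isometric. This is precisely the step that already justified the ``non-conjugate $\Rightarrow$ non-isomorphic'' reduction in the proofs of the two theorems, so I would cite it in the same form.

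\smallskip
Finally I address the compact/non-compact dichotomy. For the cocompact statement I choose the cocompact lattices guaranteed by Theorem~\ref{thm:exceptions} (the ``can be chosen cocompact'' clause) or, in the setting of Theorem~\ref{thm:general}, take $\Gamma$ cocompact at the outset; the construction preserves cocompactness within the commensurability class, so the $M_i$ are compact. For the non-compact statement, Theorem~\ref{thm:exceptions} supplies non-cocompact lattices whenever such lattices exist in $\GG$ at all, and this is exactly the escape clause in the corollary: if every $\X$-manifold is compact then there is nothing to prove, and otherwise non-cocompact lattices exist and yield non-compact $M_i$. I expect no serious obstacle beyond bookkeeping; the only subtlety is making sure the rigidity step is applied to \emph{irreducible} lattices, which is why the corollary is phrased for $\X$-manifolds (irreducible by definition) and why the hypothesis ``irreducible quotients of $X$ do exist'' is imposed at the outset.
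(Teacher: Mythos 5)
Your proposal is correct and follows essentially the same route as the paper, which simply states that the corollary ``follows directly from Theorems~\ref{thm:general}--\ref{thm:exceptions} and the existence of cocompact arithmetic lattices in $\GG$'' (via Borel--Harder and Prasad--Rapinchuk); your write-up is a faithful expansion of exactly that derivation, including the lattice-to-manifold dictionary, the use of the isotypy forced by the hypothesis that irreducible quotients exist, and the covering-space/rigidity argument for non-isometry. The only cosmetic remark is that the direction you actually need (non-isomorphic fundamental groups $\Rightarrow$ non-isometric manifolds) is elementary covering-space theory and does not require Mostow rigidity, which is only needed for the converse already used inside the proofs of the two theorems.
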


The result for $X = \Hy^3$ was proved by Wielenberg  for the case of
non-compact manifolds~\cite{Wiel81}, and later by Apasanov-Gutsul for
compact manifolds~\cite{ApaGut92}. For $X = \Hy^4$ the result with
non-compact manifolds was proved by Ivan\v{s}i\'c in his
thesis~\cite{Ivan99}. All these results are obtained by geometric
methods. In~\cite{Zimm94} Zimmerman gave a new proof for $X = \Hy^3$ by
exhibiting examples of $\Hy^3$-manifolds $M$ with first Betti number
$\beta_1$ at least $2$, and showing that this property implies the
existence of arbitrarily large families of covering spaces of $M$ of
same degree. In~\cite{Lubo96} Lubotzky showed  that there exist (many)
hyperbolic manifolds with $\beta_1 \ge 2$ in every dimension. Thus for
all $X = \Hy^n$ we have a proof of Corollary~\ref{cor:same-volume} by
Zimmerman's method. Since  super-rigidity implies
that $H^1(\Gamma \bs X,\R) = 0$ for irreducible lattices $\Gamma$ in
$\GG$ with $\R\mathrm{-rank}(\GG) \ge 2$, the same approach cannot be
used  to prove the result in this situation. Conversely, it does not
seem that our method can be adapted to include the case of $\Hy^2$ and
$\Hy^3$.

Very recently, Aka constructed non-isomorphic arithmetic lattices that
have isomorphic profinite completions~\cite{Aka}. In particular, 
his construction gives arbitrarily large families of
lattices of equal covolume in the Lie group
$\SL_n(\C)$, for any $n \ge 3$.

\subsection*{Acknowledgements}
I would like to thank Misha Belolipetsky, Pierre de la Harpe, Gopal
Prasad and Matthew Stover for helpful
comments on an early version of this paper. I also thank Menny Aka for a
helpful correspondence on his preprint.

\section{Arithmetic lattices}
\label{sec:arithmetic-lattices}


We can obviously reduce the proof of Theorem~\ref{thm:general} to the
case of an irreducible $\Gamma$. Then, like in
Theorem~\ref{thm:exceptions},  $\GG$ is supposed to be isotypic.

\subsection{}
\label{ss:arithmetic-subgroup}

For generalities on arithmetic groups we refer the reader
to~\cite{Zimmer84} and~\cite{PlaRap94}.  We briefly explain here how
irreducible arithmetic lattices in $\GG$ are obtained.  Let $k$ be a
number field with ring of integers $\zk$. Let $\G$ be an absolutely
simple simply connected algebraic group defined over $k$.  We denote by
$\bG$ the adjoint group of $\G$, i.e., the $k$-group defined as $\G$
modulo its center, and by $\pi : \G \to \bG$ the natural isogeny. 
Let $\Snc$ be the set of archimedean  places $v$ of $k$ such that $\G(k_v)$ is non-compact. 
We denote by $\G_\Snc$
the product $\prod_{v \in \Snc} \G(k_v)$, and similarly for $\bG_\Snc$.
Note that $\G_\Snc$ is connected.
For any matrix realization of $\G$, the group $\G(\zk)$ is an irreducible
lattice in $\G_\Snc$. Suppose that the connected component
$(\bG_\Snc)^\circ$ of ${\bG_\Snc}$ is
isomorphic to $\GG$. Then $\pi$ extends  to a surjective map $\piS:
\G_\Snc \to \GG$. An irreducible lattice in $\GG$ is called \emph{an arithmetic lattice} if
it is commensurable with a subgroup of the form $\piS(\G(\zk))$ for some
$k$-group $\G$ as above.

In the following $\G$ will always be a $k$-group as above, which determines a
commensurability class of arithmetic lattices in $\GG$.

\subsection{}
\label{ss:adelic-group}

We denote  by $\Vf$  the set of finite places  of $k$, and by $\Af$ the ring of
finite ad\`eles of $k$.  For each $v \in \Vf$ we consider $k_v$ the
completion  of $k$ with respect to $v$, and $\zk_v \subset k_v$ its
associated valuation ring.  A collection $\P =
(\P_v)_{v \in \Vf}$ of compact subgroups $\P_v \subset \G(k_v)$ is
called \emph{coherent} if the product $\K_\P = \prod_{v \in \Vf} \P_v$
is open in the adelic group $\G(\Af)$ (see~\cite[Ch.~6]{PlaRap94} for
information on adelic groups). For example, for any matrix
realization of $\G$, the collection $(\G(\zk_v))_{v \in \Vf}$ is
coherent.  For a coherent collection $\P = (\P_v)$, the group
\begin{eqnarray}
	\PA_\P &=&  \G(k) \cap \prod_{v \in \Vf} \P_v,
	\label{eq:PA_P}
\end{eqnarray}
where $\G(k)$ is seen diagonally embedded into $\G(\Af)$, is an
arithmetic subgroup of $\G(k)$ (and thus an arithmetic lattice in
$\G_\Snc$). This follows from the equality $\G(\zk) = \G(k) \cap \prod_v
\G(\zk_v)$ together with the inequality
\begin{eqnarray}
	[\PA_\P : \PA_{\P'}] &\le& [\K_\P : \K_{\P'}],
	\label{eq:inequality-global-local}
\end{eqnarray}
valid for any two coherent collections $\P$ and $\P'$ with
$\P'_v \subset \P_v$ for each $v\in \Vf$. Since $\G$ is simply
connected, strong approximation holds~\cite[Theorem~7.12]{PlaRap94} and
it follows that~\eqref{eq:inequality-global-local} is in fact an
equality. We put this (known) result in the following lemma.

\begin{lemma}
	\label{lem:index-by-strong-approx}

	Let $\P = (\P_v)_{v \in \Vf}$ and $\P' = (\P'_v)_{v \in \Vf}$ be
	two coherent collections of compact subgroups such that $\P'_v
	\subset \P_v \subset \G(k_v)$ for all $v \in \Vf$. Then

	\begin{eqnarray*}
		[\PA_\P : \PA_{\P'}] &=& \prod_{v \in \Vf} [\P_v:\P_v'].
		\label{eq:index-by-strong-approx}
	\end{eqnarray*}

\end{lemma}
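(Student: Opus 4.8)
The plan is to show that the diagonal embedding $\G(k) \hookrightarrow \G(\Af)$ induces a \emph{bijection} on the coset spaces $\PA_\P / \PA_{\P'} \to \K_\P / \K_{\P'}$, and then to evaluate the index on the local side. First I would record the local computation. Since both $\P$ and $\P'$ are coherent, there is a fixed matrix realization of $\G$ and a finite set of places outside of which $\P_v = \P'_v = \G(\zk_v)$; hence $[\P_v:\P'_v] = 1$ for all but finitely many $v$, the restricted product $\K_\P$ has $\K_{\P'}$ as an open subgroup, and
\begin{eqnarray*}
 [\K_\P : \K_{\P'}] &=& \prod_{v \in \Vf} [\P_v : \P'_v],
\end{eqnarray*}
a finite product. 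This reduces the lemma to proving the equality $[\PA_\P : \PA_{\P'}] = [\K_\P : \K_{\P'}]$.

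For injectivity of the coset map I would use the identity $\PA_{\P'} = \PA_\P \cap \K_{\P'}$, which holds because $\K_{\P'} \subset \K_\P$ gives $\G(k) \cap \K_\P \cap \K_{\P'} = \G(k) \cap \K_{\P'}$. If $\gamma_1, \gamma_2 \in \PA_\P$ lie in a common coset $\gamma_1\K_{\P'} = \gamma_2\K_{\P'}$, then $\gamma_1^{-1}\gamma_2 \in \K_{\P'} \cap \PA_\P = \PA_{\P'}$, so they already lie in the same $\PA_{\P'}$-coset. This recovers the inequality~\eqref{eq:inequality-global-local}.

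The heart of the matter is surjectivity, and here I would invoke strong approximation. Since $\G$ is absolutely simple and simply connected, and $\G_\Snc$ is non-compact, the group $\G(k)$ is dense in $\G(\Af)$ by~\cite[Theorem~7.12]{PlaRap94}. Given any $x \in \K_\P$, the translate $x\K_{\P'}$ is open in $\G(\Af)$, so density yields some $\gamma \in \G(k) \cap x\K_{\P'}$. Because $x \in \K_\P$ and $\K_{\P'} \subset \K_\P$, we have $\gamma \in x\K_{\P'} \subset \K_\P$, whence $\gamma \in \G(k) \cap \K_\P = \PA_\P$; meanwhile $\gamma^{-1}x \in \K_{\P'}$ shows that $\gamma$ and $x$ represent the same coset. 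Thus every coset in $\K_\P / \K_{\P'}$ meets $\PA_\P$, which is the surjectivity we need, and combining it with injectivity gives the asserted equality.

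The step requiring the most care is checking that the hypotheses of strong approximation are genuinely in force, namely that $\G_\Snc$ is non-compact. This is exactly guaranteed by the standing assumptions: $\GG \cong (\bG_\Snc)^\circ$ has no compact factors, and $\Snc$ is by definition the set of archimedean places $v$ at which $\G(k_v)$ is non-compact, so $\Snc \neq \emptyset$. Once this is secured the density statement does all the work, and the remaining verifications (openness of translates, the restricted-product index formula) are routine.
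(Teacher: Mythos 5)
Your proof is correct and follows the same route the paper takes: the paper establishes the inequality $[\PA_\P : \PA_{\P'}] \le [\K_\P : \K_{\P'}]$ and then invokes strong approximation for the simply connected group $\G$ to upgrade it to an equality, which is exactly what your injectivity/surjectivity argument for the coset map does in detail. Your verification that $\G_\Snc$ is non-compact (so that strong approximation applies) is the right point to check, and the rest matches the paper's (largely implicit) argument.
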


\subsection{}
\label{ss:non-conjugated}

For every field extension $L|k$ with algebraic closure $\bL$,
the group of $L$-points given by $\bG(L)$ is identified with the
inner automorphisms of $\G$ that are defined over $L$. Note that in
general $\bG(L)$ is larger than the image of $\G(L)$ in $\bG(\bL)$.

\begin{lemma}
	\label{lem:non-conjugated}

	Let $\P$ and $\P'$ be two coherent collections of compact
	subgroups $\P_v, \P_v' \subset \G(k_v)$. Suppose that there
	exist a place $w \in \Vf$ such that $\P_{w}$ and $\P_{w}'$
	are not conjugate by the action of $\bG(k_{w})$. Moreover, we
	suppose that $\P_{w}$ and $\P_{w}'$ contain the center of
	$\G(k_{w})$. Then
	$\piS(\PA_\P)$ and $\piS(\PA_{\P'})$ are not conjugate in~$\GG$.

\end{lemma}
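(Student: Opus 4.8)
The plan is to prove the contrapositive: assume that $\piS(\PA_\P)$ and $\piS(\PA_{\P'})$ are conjugate in $\GG$, and deduce that $\P_w$ and $\P_w'$ are conjugate under $\bG(k_w)$. So suppose there is $g \in \GG$ with $g \, \piS(\PA_\P) \, g^{-1} = \piS(\PA_{\P'})$.

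The strategy has three stages. First, I want to promote the conjugating element from an element of the real Lie group $\GG$ to an element of the adjoint group $\bG$ over the field $k$ (or over a suitable completion). The key tool is rigidity: the lattices $\PA_\P$ and $\PA_{\P'}$ are Zariski-dense in $\G$ (they are arithmetic lattices in the absolutely simple simply connected group $\G$), so an abstract isomorphism between them induced by conjugation should be algebraic. Concretely, conjugation by $g$ gives an isomorphism of the arithmetic groups $\piS(\PA_\P) \to \piS(\PA_{\P'})$, and because $\bG(L)$ is identified with the inner automorphisms of $\G$ defined over $L$ (as recalled just before the lemma), I expect this abstract conjugation to be realized by an element $\gamma \in \bG(k)$. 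The hypothesis that $\P_w$ and $\P_w'$ contain the center of $\G(k_w)$ is what lets me pass cleanly between $\G$ and $\bG = \G/\mathrm{center}$ without losing information at the place $w$: the parahoric data is not disturbed by the central kernel of $\pi$.

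Second, once I have a global element $\gamma \in \bG(k)$ conjugating the two global lattices, I localize at the place $w$. The element $\gamma$ lies in $\bG(k) \subset \bG(k_w)$, and I need to check that conjugation by $\gamma$ sends $\P_w$ to $\P_w'$. The point is that $\PA_\P$ is determined by the coherent collection $\P$ via the intersection in~\eqref{eq:PA_P}, so $\gamma \PA_\P \gamma^{-1} = \PA_{\P'}$ should force the local compatibility $\gamma \P_v \gamma^{-1} = \P_v'$ up to the relationship between global and local data. Here strong approximation (used already in Lemma~\ref{lem:index-by-strong-approx}) is the essential input: because $\G$ is simply connected, the closure of $\PA_\P$ in $\G(\Af)$ recovers $\K_\P = \prod_v \P_v$, so a global conjugacy is detected placewise. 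Taking the closure at $w$ then yields $\gamma \P_w \gamma^{-1} = \P_w'$ with $\gamma \in \bG(k_w)$, contradicting the hypothesis that $\P_w$ and $\P_w'$ are not $\bG(k_w)$-conjugate.

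The main obstacle I anticipate is the first stage: upgrading the real conjugacy in $\GG$ to an algebraic conjugacy defined over $k$. The map $\piS : \G_\Snc \to \GG$ has a kernel (the compact archimedean factors together with the finite center), so conjugation in $\GG$ only controls things up to this kernel; I must ensure the conjugating element can be taken in the adjoint group $\bG$ over $k$ rather than merely in $\GG$ or in some completion. This is where the identification of $\bG(L)$ with inner automorphisms defined over $L$ does the work, but the subtlety is that an abstract isomorphism of lattices need not a priori come from an inner automorphism — one must invoke the fact that for these arithmetic groups every commensurability-preserving automorphism is induced by $\Aut(\G)$, and then rule out the outer part. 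I would handle this by appealing to Mostow/Margulis rigidity (or Borel's density theorem plus the structure of $\Aut(\G)$) to realize the conjugation algebraically, and then use the centrality hypothesis to transfer the statement faithfully between $\G(k_w)$ and $\bG(k_w)$ at the place $w$.
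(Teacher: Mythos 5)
Your proposal is correct and follows essentially the same route as the paper's proof: Zariski density of the arithmetic subgroups upgrades the conjugating element to one in $\bG(k)$ (the centrality hypothesis ensuring nothing is lost under $\piS$), and strong approximation identifies the closure of $\PA_\P$ in $\G(k_w)$ with $\P_w$, yielding the contradiction at $w$. The only difference is cosmetic: your worry about ruling out outer automorphisms is unnecessary, since the conjugation is already by an element of $(\bG_\Snc)^\circ$ and hence inner, so Borel/Zariski density alone does the job, exactly as in the paper.
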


\begin{proof}
	Let $\CG$ be the center of $\G$. 
	We may assume that each $\P_v$ (resp. $\P'_v$) contains the
	center $\CG(k_v)$. If not replace $\P_v$ by $\CG(k_v) \cdot
	\P_v$; the image $\piS(\PA_\P)$ does not change with this
	modification, and the hypothesis at $w$ is kept.

	Suppose that $\piS(\PA_\P)$ and $\piS(\PA_{\P'})$ are conjugate in $\GG$. 
	Then $\PA_{\P}$ and
	$\PA_{\P'}$ are conjugate under the action of $\GG \cong
	(\bG_\Snc)^\circ$. 
	Since arithmetic subgroups of $\G$ are
	Zariski-dense, we have more precisely that $\PA_\P$ and
	$\PA_{\P'}$ are conjugate by an element  $g \in  \bG(k)$.
   By strong approximation the closure of $\PA_\P$ (resp.
	$\PA_{\P'}$) in $\G(k_w)$ is $\P_w$ (resp. $\P'_w$), and it
	follows that $g$ conjugates $\P_w$ and $\P'_w$. 	
\end{proof}

\section{Parahoric subgroups and volume}
\label{sec:parahorics}

In the following we assume that the reader has some knowledge of
Bruhat-Tits theory. All the facts we need can be found in 
Tits' survey~\cite{Tits79}. See~\cite[\S 3.4]{PlaRap94} for a more
elementary introduction.

\subsection{}
\label{ss:parahorics}

Let $v \in \Vf$.
A \emph{parahoric subgroup} of $\G(k_v)$, a certain kind of compact open
subgroup of $\G(k_v)$, is by
definition the stabilizer of a simplex in the Bruhat-Tits building
attached to $\G(k_v)$. There are a finite number of conjugacy classes
of parahoric subgroups in $\G(k_v)$; these conjugacy classes in
$\G(k_v)$ correspond canonically to proper subsets of the local Dynkin diagram $\Delta_v$ of $\G(k_v)$. 
If $\P_v \subset \G(k_v)$ is a parahoric subgroup, we denote by
$\tau(\P_v) \subset \Delta_v$ its associated subset, and we call
it the \emph{type} of $\P_v$. Two parahoric subgroups $\P_v$ and $\P_v'$  can
be conjugate by an element of $\bG(k_v)$ only if there is an
automorphism of $\Delta_v$ that sends $\tau(\P_v)$ to $\tau(\P_v')$.  

\subsection{}
\label{ss:residual-group-scheme}

Let us denote by $\f_v$ the residual field of $k_v$. To each parahoric
subgroup $\P_v \subset \G(k_v)$, a smooth affine group scheme over
$\zk_v$ is associated in a canonical way~\cite[\S 3.4.1]{Tits79}. By
reduction modulo $v$, this determines in turn an algebraic group over
$\f_v$. Its maximal reductive quotient is a $\f_v$-group that will be
denoted by the symbol $\M_v$. The structure of $\M_v$ can be determined
from $\tau(\P_v)$ and the local index of $\G(k_v)$ by the procedure
described in~\cite[\S 3.5]{Tits79}.

\subsection{}
\label{ss:lang-isogeny}

Let $(\M_v,\M_v)$ be the commutator group of $\M_v$, and let $\Rad(\M_v)$
be the radical of $\M_v$. Both are defined over $\f_v$, and we have
(see~\cite[8.1.6]{Spring98})
\begin{eqnarray*}
	\label{eq:M_v-as-product-almost-direct}
	\M_v &=& (\M_v,\M_v) \cdot \Rad(\M_v).	
\end{eqnarray*}
The radical $\Rad(\M_v)$ is a central torus in $\M_v$, whose intersection
with $(\M_v,\M_v)$ is finite~\cite[7.3.1]{Spring98}. It follows that the
product map
\begin{eqnarray*}
	\label{eq:M_v-product-map}
	(\M_v,\M_v) \times \Rad(\M_v) &\to& \M_v
\end{eqnarray*}
is an isogeny. By applying Lang's isogeny theorem~\cite[Prop.~6.3]{PlaRap94}, 
we obtain that the order of $\M_v(\f_v)$ is given by the following:
\begin{eqnarray}
	\label{order-of-M_v-as-product}
	|\M_v(\f_v)| &=& |(\M_v,\M_v)(\f_v)| \cdot |\Rad(\M_v)(\f_v)|.
\end{eqnarray}


\medskip

\begin{theorem}[Prasad]
	
	\label{thm:prasad-formula}

	Let $\mu$ be a Haar measure on $\G_\Snc$. Then there exists a
	constant $\cG$ (depending on the algebraic group $\G$) such that for any coherent collection  $\P$ of
	parahoric subgroups $\P_v \subset \G(k_v)$, we have 
	\begin{eqnarray*}
		\mu(\Lambda_\P\bs \G_\Snc) &=& \cG 
		\prod_{v \in \Vf} \frac{|\f_v|^{\left( t_v + \dim
		\M_v \right)/2 }}{|\M_v(\f_v)|},
	\end{eqnarray*}
	where for each $v \in \Vf$ the integer $t_v$ depends only on the
	$k_v$-structure of $\G$.

\end{theorem}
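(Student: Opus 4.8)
The plan is to pass to the adelic group and exploit the fact that, since $\G$ is simply connected and semisimple, its Tamagawa number equals $1$ (the Weil conjecture on Tamagawa numbers, proved in this generality by Kottwitz). Concretely, fix a nonzero $k$-rational invariant differential form $\omega$ of top degree on $\G$. It determines a local Haar measure $\omega_v$ on each $\G(k_v)$, and the associated Tamagawa measure on the full adele group $\G(\A)$ has the shape $|d_k|^{-\dim\G/2}\prod_v \omega_v$, where $d_k$ is the discriminant of $k$. In the semisimple case no convergence factors are needed, and by Weil's theorem this measure assigns total mass $1$ to $\G(k)\bs\G(\A)$.

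First I would use strong approximation to collapse the adelic quotient. Since $\G$ is simply connected and $\G_\Snc$ is noncompact, $\G(k)$ is dense in $\G(\Af)$, so $\G(\A) = \G(k)\cdot(\G_\infty\times\K_\P)$ with a single relevant class, where $\G_\infty = \prod_{v\mid\infty}\G(k_v)$. Combined with the definition $\PA_\P = \G(k)\cap\K_\P$ this yields a measure identity of the form
\begin{equation*}
	1 = \lambda\cdot\omega_\infty\!\left(\PA_\P\bs\G_\infty\right)\cdot\prod_{v\in\Vf}\omega_v(\P_v),
\end{equation*}
where $\lambda$ accounts for the discriminant power $|d_k|^{-\dim\G/2}$ and for the compact archimedean factors (those $v\notin\Snc$). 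Crucially $\lambda$, together with the comparison between $\omega_\infty$ and the chosen $\mu$ on $\G_\Snc$, depends on $\G$ but not on $\P$. Solving for the covolume expresses $\mu(\PA_\P\bs\G_\Snc)$ as a $\G$-constant times $\prod_{v\in\Vf}\omega_v(\P_v)^{-1}$.

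The heart is the local computation of $\omega_v(\P_v)$ for $v\in\Vf$. Here I would use the smooth affine $\zk_v$-group scheme $\mathfrak{G}_v$ attached to $\P_v$ by Bruhat--Tits theory, for which $\P_v = \mathfrak{G}_v(\zk_v)$. Reduction modulo $v$ and the standard volume computation for a smooth integral model give $\omega_v(\P_v) = N_v\,|\f_v|^{-\dim\G}\,|\overline{\mathfrak{G}}_v(\f_v)|$, where $\overline{\mathfrak{G}}_v$ is the special fibre and $N_v$ compares $\omega_v$ to the measure defined by the $\zk_v$-model; the factor $N_v$ depends only on the $k_v$-structure of $\G$ and equals $1$ for all but finitely many $v$. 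The special fibre is an extension of its maximal reductive quotient $\M_v$ by a unipotent radical of dimension $\dim\G-\dim\M_v$, so counting $\f_v$-points gives $|\overline{\mathfrak{G}}_v(\f_v)| = |\f_v|^{\dim\G-\dim\M_v}\,|\M_v(\f_v)|$, with $|\M_v(\f_v)|$ governed by \eqref{order-of-M_v-as-product} via Lang's isogeny theorem. Substituting yields $\omega_v(\P_v) = N_v\,|\f_v|^{-\dim\M_v}\,|\M_v(\f_v)|$, so that the entire $\P$-dependence of the covolume is carried by the factor $|\f_v|^{\dim\M_v}/|\M_v(\f_v)|$.

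The main obstacle is the precise bookkeeping of the normalization: I must show that $N_v^{-1}$ contributes exactly a half-integral power $|\f_v|^{(t_v-\dim\M_v)/2}$, up to a $\P$-independent constant, thereby converting $|\f_v|^{\dim\M_v}/|\M_v(\f_v)|$ into the claimed $|\f_v|^{(t_v+\dim\M_v)/2}/|\M_v(\f_v)|$ with $t_v$ depending only on the $k_v$-structure of $\G$. This is where the discriminant of the field splitting $\G$ enters, and where one must appeal to Tits' classification of the local index and residual structure~\cite{Tits79} to pin down the exponent. Collecting all resulting $\P$-independent constants — together with $\lambda$, the archimedean volumes, and $\tau(\G)=1$ — into a single constant $\cG$ then produces the stated formula.
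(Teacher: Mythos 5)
The paper does not actually reprove this statement: it is quoted as a weak form of Prasad's volume formula and the ``proof'' consists of the citation to \cite[Theorem~3.7]{Pra89}. Your sketch instead reconstructs Prasad's argument from scratch, and its global skeleton is correct: Tamagawa number $1$ for the simply connected $\G$, strong approximation to collapse the adelic quotient and reduce the covolume to a $\G$-constant times $\prod_{v\in\Vf}\omega_v(\P_v)^{-1}$, and a local computation of $\omega_v(\P_v)$ via the smooth Bruhat--Tits model and Lang's theorem.

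However, there is a genuine gap at the one step that is the technical heart of Prasad's paper. You assert that the comparison factor $N_v$ between $\omega_v$ and the measure defined by the $\zk_v$-model ``depends only on the $k_v$-structure of $\G$ and equals $1$ for all but finitely many $v$''. This is false: $N_v$ depends on the parahoric $\P_v$ itself, since different parahorics give different integral models and hence different normalizations of the invariant top form. Already for $\G=\SL_2$ over $\Q_p$ with the standard form one finds $N_v=1$ for the hyperspecial subgroup $\SL_2(\Z_p)$ but $N_v=p^{-1}$ for an Iwahori subgroup (its special fibre has reductive quotient of dimension $1$ and point count $p^2(p-1)$, giving model measure $(p-1)/p$, whereas $\omega_v(I)=(1-p^{-2})/(p+1)=(p-1)/p^2$). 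This discrepancy is exactly why the correct exponent in the formula is $(t_v+\dim\M_v)/2$ rather than $\dim\M_v$. You do then state --- inconsistently with the earlier claim --- the correct target identity $N_v^{-1}=|\f_v|^{(t_v-\dim\M_v)/2}$ up to a $\P$-independent constant, but you explicitly label it ``the main obstacle'' and do not prove it. Establishing that identity is the content of \cite[\S 2]{Pra89}, via the big-cell decomposition of the parahoric group schemes and the filtrations of the root subgroups, and without it your argument does not yield the stated formula. For the purposes of this paper the cleanest course is simply to invoke Prasad's theorem, as the author does.
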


This theorem is a much weaker form of Prasad's volume
formula, given in~\cite[Theorem~3.7]{Pra89}. In fact, Prasad's result
explicitly gives the value of $\cG$ for a natural normalization of the
Haar measure $\mu$. Moreover, the integers $t_v$ are explicitly known.
Since we want to prove qualitative
results, we will not need more than the statement of
Theorem~\ref{thm:prasad-formula}.

\section{Proof of Theorem~\ref{thm:general} }
\label{sec:proof-1}

We now prove Theorem~\ref{thm:general}, assuming that
the group $\GG$ is isotypic. Let $\Gamma \subset \GG$
be an irreducible arithmetic lattice, with $\G$ and $\bG$ the
associated $k$-groups as in Section~\ref{ss:arithmetic-subgroup}. We
retain all notation introduced above.

\subsection{}
\label{ss:cebotarev}

The group $\G$ is quasi-split over $k_v$ for almost all places
$v$~\cite[Theorem~6.7]{PlaRap94}. Let us denote by $\nqs$ the set of the
places $v \in \Vf$ such $\G$ is not quasi-split over $k_v$.
Let $\ell|k$ be the smallest Galois
extension such that $\G$ is an inner form over $\ell$ (see for
instance~\cite[Ch.~17]{Spring98}, where this field is denoted by
$E_\tau$). If $v \not
\in \nqs$ is totally split in $\ell|k$, i.e., if $\ell \subset k_v$, then
$\G$ is split over $k_v$. It follows from the Chebotarev density theorem
that the set of places $v \not \in \nqs$ that are totally split in $\ell|k$ is
infinite. Let us denote this infinite subset of $\Vf$ by $\Spl$.

\subsection{}
\label{ss:good-parahorics-general}

Let $v \in \Spl$. The local Dynkin diagram $\Delta_v$ of $\G(k_v)$ can
be found in~\cite[\S 4.2]{Tits79}. Let $n$ be the absolute rank of
$\GG$ (and of $\G$). We suppose first that $\GG$ (and
consequently $\G$ as well) is not of absolute type $\tA_n$. Then there
exist two vertices $\alpha_1,\alpha_2 \in \Delta_v$ such that $\alpha_1$
is hyperspecial and $\alpha_2$ is not. Let $\Po_v$ (resp. $\Poo_v$) be a
parahoric subgroup in $\G(k_v)$ of type $\tau(\Po_v) = \left\{ 
\alpha_1 \right\}$ (resp.
$\tau(\Poo_v) = \left\{  \alpha_2 \right\}$). Then $\Po_v$ and $\Poo_v$ are not conjugate
by the action of $\bG(k_v)$ (see Section~\ref{ss:parahorics}). Note also
that these two groups, being parahoric subgroups, contain the center of
$\G(k_v)$. We consider the subgroup $\M_v$ associated with $\Po_v$
(resp. associated with $\Poo_v$). In both cases $i=1,2$ the radical
$\Rad(\M_v)$ is a split torus of rank $n-1$ and the semi-simple
part  $(\M_v,\M_v)$ is of type $\tA_1$.
From~\eqref{order-of-M_v-as-product} we see that the order of
$\M_v(\f_v)$ is the same for $\Po_v$ and $\Poo_v$.

If $\G$ is of type $\tA_n$ then $\Delta_v$ is a cycle of $n+1$ vertices,
all hyperspecial. The group $\bG(k_v)$ acts simply transitively by
rotations on $\Delta_v$. 
Let us choose a labelling $\alpha_0, \dots, \alpha_n$ of
the vertices that follows an orientation of $\Delta_v$. We now consider $\Po_v$ with
$\tau(\Po_v) = \left\{ \alpha_0,\alpha_2 \right\}$, and $\Poo_v$ with $\tau(\Poo_v) =
\left\{  \alpha_0, \alpha_3 \right\}$. If $n\ge 4$ then no rotation
of $\Delta_v$ sends $\tau(\Po_v)$ to $\tau(\Poo_v)$, so that $\Po_v$ and
$\Poo_v$ are not conjugate by $\bG(k_v)$. Moreover, we can check as
above that  the order of $\M_v$ is the same for $\Po_v$ and $\Poo_v$.

\label{ss:good-parahorics-A_n}

\subsection{}
\label{ss:construct-families}

We consider a coherent collection $\P$ of parahoric subgroups $\P_v \subset
\G(k_v)$. 
Let $m \in \N$ and choose a finite subset $\Spl_m \subset \Spl$  of
length $m$. For each $v \in \Spl_m$ we replace $P_v$ by either $\Po_v$
or $\Poo_v$, and consider the arithmetic subgroup in $\G(k)$ associated with this
modified coherent collection. Thus we obtain $2^m$ different arithmetic
subgroups in $\G(k)$, and by Lemma~\ref{lem:non-conjugated} their images
in $\GG$ are pairwise non-conjugate. But by
Theorem~\ref{thm:prasad-formula} they all have the same covolume.

To obtain families of torsion-free lattices we make the following
change. Let us choose two distinct places $v_1, v_2 \in \Spl \setminus
\Spl_m$, and for $i=1,2$ replace $\P_{v_i}$ by its subgroup $\Pk_i$
defined as the kernel of the reduction modulo $v_i$. We denote this
modified coherent collection by $\P'$. Let $p_i$ be the characteristic
of $\f_{v_i}$. Then $\Pk_i$ is a
pro-$p_i$-group~\cite[Lemma~3.8]{PlaRap94}, and since $p_1 \neq p_2$ we
have that $\Pk_1 \cap \Pk_2$ is torsion-free. Thus $\Lambda_{\P'}$ is
torsion-free. The above construction with the coherent
collection $\P'$ instead of $\P$ now gives non-conjugate lattices in
$\GG$ that are torsion-free.  Using
Lemma~\ref{lem:index-by-strong-approx} we see that these sublattices
also share the same covolume.

\subsection{}
\label{ss:strong-rigidity}

Let $\Aut(\GG)$ be the automorphism group of $\GG$. Then
$\Aut(\GG)/\GG$ (where $\GG$ acts on itself as inner automorphisms)
is a group whose order is bounded by the symmetries of the Dynkin
diagram of $\GG$. In particular, it is a finite group. By letting
$m$ tends to infinity, we have constructed arbitrarily large families of
non-conjugate lattices in $\GG$ of the same covolume. By considering each
family modulo the equivalence induced by the action of
$\Aut(\GG)/\GG$, we see that there exist arbitrarily large families of
lattices that are not conjugate by $\Aut(\GG)$. Since strong rigidity
holds for all the lattices under consideration (see~\cite[\S 5.1]{Zimmer84}
and the references given there), we get that these families consist of
non-isomorphic lattices.

\section{Proof of Theorem~\ref{thm:exceptions} }
\label{sec:proof-2}

We now give the proof of Theorem~\ref{thm:exceptions}. Thus we suppose
that $\gC$ has only factors of type $\tA_n$ (with $n = 2$ or $n = 3$). Let
$m \in \N$.

\subsection{}
\label{ss:good-extension-ell}

Let $k$ be a number field that has as many complex places as there are
simple factor of $\GG$ isomorphic to $\PSL_{n+1}(\C)$. Let $\ell|k$ be a
quadratic extension having one complex place for each factor of $\GG$
that is projective unitary (i.e., of the form $\PU(p,q)$) or isomorphic to
$\PSL_{n+1}(\C)$.  Using approximation for $k$ (see~\cite[Theorem~(3.4)]{Neuk99}) it is
possible to choose $\alpha \in k$ such that $\ell = k(\sqrt{\alpha})$
is as above with the additional property that for the set $\Ram \subset \Vf$ of
ramified places in $\ell|k$ we have $2^{\# \Ram} \ge m$.

\subsection{}
\label{ss:G-from-PraRap}

Let $\G_0$ be the quasi-split simply connected $k$-group of type $\tA_n$
with splitting field $\ell$. By~\cite[Theorem~1]{PraRap06}, there exists
an inner form $\G$ of $\G_0$ such that $\G|k_v$ is quasi-split for all
$v \in \Ram$ and such that $(\bG_\Snc)^\circ \cong \GG$. The group $\G$ can be chosen to be $k$-isotropic unless the
condition~(1) in~\cite{PraRap06} is not satisfied at infinite places, in
which case there is no isotropic $k$-group $\G$ with $(\bG_\Snc)^\circ
\cong \GG$. We can always choose $\G$ to be anisotropic, by specifying
in~\cite[Theorem~1]{PraRap06} that $\G$ is $k_v$-anisotropic at some $v
\in \Vf \setminus \Ram$. 

\subsection{}
\label{ss:end-pf-thm-2}

The local Dynkin diagram $\Delta_v$ of $\G(k_v)$ for $v \in \Ram$ is
shown in~\cite[\S 4.2]{Tits79}; it is named $\mbox{C--BC}_1$ for the
type $\tA_2$, and $\mbox{C--B}_2$ for $\tA_3$ ($=\tD_3$). With this
diagram at hand we can easily construct (similarly to
Section~\ref{ss:good-parahorics-general}) a pair of non-conjugate
parahoric subgroups of $\G(k_v)$ ($v \in \Ram$) that have equal volume.
Taking them as part of coherent collection we produce $m$ pairwise
non-conjugate arithmetic subgroups that, by
Theorem~\ref{thm:prasad-formula}, are of the same covolume in $\GG$.
By Godement's compactness criterion, these lattices are cocompact
exactly when $\G$ is anisotropic.  The last
steps of the proof are verified exactly as in
Sections~\ref{ss:construct-families}--\ref{ss:strong-rigidity}.

\bibliographystyle{amsplain}
\bibliography{/home/vincent/Math/mes-textes/emery-bib}

\end{document}